\documentclass[11pt]{article}

\usepackage[a4paper,margin=1.15in]{geometry}
\usepackage{amsmath,amssymb,amsthm}
\usepackage{mathtools}
\usepackage[colorlinks=true,linkcolor=blue,citecolor=blue,urlcolor=blue]{hyperref}
\hypersetup{
  pdftitle={Exact Minima of Finite Absolute Cosine Sums},
  pdfauthor={RuiXi Sun},
  pdfsubject={Exact minima and equality cases for finite absolute cosine sums}
}

\newtheorem{theorem}{Theorem}
\newtheorem{lemma}{Lemma}
\newtheorem{remark}{Remark}

\title{Exact Minima of Finite Absolute Cosine Sums}
\author{RuiXi Sun}
\date{}

\begin{document}

\maketitle

\begin{abstract}
We give a self-contained proof of the exact minimum
\(M_n=\min_{x\in\mathbb R}\sum_{k=1}^n |\cos(kx)|\) for every positive
integer \(n\). One has \(M_n=\lfloor n/2\rfloor\) when
\(n\notin\{2,4,6\}\), whereas
\[
M_2=\frac1{\sqrt2},\qquad
M_4=1+\frac{\sqrt3}{2},\qquad
M_6=
\frac{-1+3\sqrt5+2\sqrt{5+2\sqrt5}}4.
\]
For \(n\notin\{2,4,6\}\), the minimizers are exactly
\(x\equiv\pi/2\pmod\pi\). For \(n=2,4,6\), they are respectively
\(x\equiv\pm\pi/4\), \(\pm\pi/6\), and \(\pm\pi/10\pmod\pi\).
The elementary proof combines a reduction by piecewise concavity and
residue permutations with strict estimates for finite trigonometric sums.
\end{abstract}

\section{Introduction}

For a positive integer \(n\), consider
\[
        S_n(x)=\sum_{k=1}^n |\cos(kx)|.
\]
The point \(x=\pi/2\) gives
\[
        S_n\!\left(\frac{\pi}{2}\right)
        =\left\lfloor \frac n2\right\rfloor,
\]
so the natural question is whether this is the global minimum.

Special cases of this problem have earlier precedents. In his solution to
Problem M590, Gusyatnikov proves the two-term minimum \(M_2=1/\sqrt2\)
and states the sharp inequality \(S_4(x)\ge1+\sqrt3/2\) as an additional
exercise~\cite{Gusyatnikov1980}. Since equality in the latter occurs at
\(x=\pi/6\), the value of \(M_4\) is already contained in that problem
collection.

In 2022, HDD~292 posed the following inequality on
Mathematics Stack Exchange~\cite{MSE2022}:
\[
        |\cos x|+|\cos 2x|+\cdots+|\cos nx|
        \ge \left\lfloor\frac n2\right\rfloor,
        \qquad n\neq 2,4,6.
\]
The post contains more than the statement of the problem: it gives the
reduction to rational break points by piecewise concavity, the periodic
cotangent sum, and the lower bound obtained by selecting the smallest
distinct folded residues. In the notation used below, this is the bound
\(S_{q+r}(p\pi/(2q))\ge A(q,r)\). It also separates the ranges
\(n\ge2q\) and \(q\le n<2q\), but does not give a complete proof of
the estimates needed to finish the argument.

Related inequalities for termwise absolute values include the alternating
sums with triangular weights studied by Alzer, Liu and Shi~\cite{AlzerLiuShi2013} and the weighted absolute sine sums studied by
Alzer and Volkmer~\cite{AlzerVolkmer2026}. The alternating sine-square
and cosine-square inequalities of Alzer and Kwong~\cite{AlzerKwong2017} concern a different family. The position of the
absolute value also distinguishes the present problem from minima of
signed cosine sums, as in Odlyzko~\cite{Odlyzko1982}, and from the
integral \(L^1\) inequalities associated with Littlewood's problem,
established independently by McGehee, Pigno and Smith~\cite{McGeheePignoSmith1981} and Konyagin~\cite{Konyagin1982}.
Those integral inequalities concern the absolute value of an entire
exponential sum, rather than the pointwise sum of absolute values
considered here.

Starting from the reductions recorded in~\cite{MSE2022}, we supply
strict estimates in both ranges and give a unified proof for every
positive integer \(n\), including the exceptional values and all
equality cases. The contribution is the completion of these estimates
and the full minimizer classification; the question, the initial
reductions, and the values of \(M_2\) and \(M_4\) have the precedents
described above.

\section{Main result}

\begin{theorem}\label{thm:main}
For \(n\ge 1\), define
\[
        M_n=\min_{x\in\mathbb R} S_n(x),
        \qquad
        S_n(x)=\sum_{k=1}^n |\cos(kx)|.
\]
Then
\[
        M_n=\left\lfloor\frac n2\right\rfloor
        \qquad (n\notin\{2,4,6\}),
\]
and the exceptional values are
\[
        M_2=\frac1{\sqrt2},\qquad
        M_4=1+\frac{\sqrt3}{2},
\]
\[
        M_6=
        \frac{-1+3\sqrt5+2\sqrt{5+2\sqrt5}}4.
\]
Moreover, if \(n\notin\{2,4,6\}\), then
\[
        S_n(x)=M_n
        \quad\Longleftrightarrow\quad
        x\equiv \frac{\pi}{2}\pmod{\pi}.
\]
For the three exceptional cases the minimizers are, respectively,
\[
        x\equiv \pm\frac{\pi}{4},\qquad
        x\equiv \pm\frac{\pi}{6},\qquad
        x\equiv \pm\frac{\pi}{10}
        \pmod{\pi}.
\]
\end{theorem}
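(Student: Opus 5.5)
The plan follows the reduction recorded in the introduction. \emph{First step: reduce to break points.} Each term \(|\cos(kx)|\) is concave on every interval where \(\cos(kx)\) keeps its sign. Hence \(S_n\) is concave on every interval between consecutive zeros of the functions \(\cos(kx)\), \(1\le k\le n\). A continuous function that is concave on an interval attains its minimum over the closed interval at an endpoint. So the global minimum of \(S_n\), which is \(\pi\)-periodic and even, is attained at some point \(x=p\pi/(2q)\) with \(p\) odd, \(\gcd(p,2q)=1\) and \(1\le q\le n\). Equality analysis must track strict concavity. On any interval, some term with \(\cos(kx)\not\equiv0\) is strictly concave. Therefore a minimizer must itself be a break point, so classifying minimizers reduces to comparing the values at break points.

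\emph{Second step: evaluate at break points via residues.} Fix \(x=p\pi/(2q)\). The value \(|\cos(kp\pi/(2q))|\) depends only on \(k\bmod 2q\) up to the fold \(j\mapsto 2q-j\). Since \(p\) is odd and coprime to \(2q\), multiplication by \(p\) permutes the residues. Over one full period \(k=1,\dots,2q\), the sum therefore equals \(\sum_{j=1}^{2q}|\cos(j\pi/(2q))|\), independent of \(p\). This is a periodic sum computable in closed form as \(\cot(\pi/(4q))\) or a close variant. Write \(n=2qm+s\) in the range \(n\ge 2q\), and \(n=q+r\) in the range \(q\le n<2q\). In the second range we use the bound \(S_{q+r}(p\pi/(2q))\ge A(q,r)\) from the introduction. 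It is obtained by noting that the \(q+r\) folded residues \(\pm kp \bmod 2q\) cover each fold class at most twice. Hence the sum is at least the sum of the smallest admissible values \(\cos(\pi\ell/(2q))\), taken over the largest \(\ell\) near \(q\), with the zero residue \(\ell=q\) occurring once.

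\emph{Third step: strict estimates.} We must show that the periodic contribution \(\cot(\pi/(4q))\) per block of length \(2q\) strictly exceeds \(q\), which is what \(\lfloor n/2\rfloor\) assigns to that block, whenever \(q\ge2\). This follows from \(\cot t>1/t-t/3\), which gives \(\cot(\pi/(4q))\ge 4q/\pi-\pi/(12q)>q\), with a small margin proportional to \((4/\pi-1)q\). The remaining partial block must then be controlled by the lower bound \(A(q,r)\) against \(\lfloor r/2\rfloor\) plus a loss. For \(q\le n<2q\) we need \(A(q,r)\ge\lfloor (q+r)/2\rfloor\), with equality only when \(q=1\), which corresponds to \(x=\pi/2\). \(A(q,r)\) is a sum of cosines of equally spaced angles near \(\pi/2\), so it has closed forms as ratios of sines. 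We will bound it using \(\sin t\ge t-t^3/6\) and concavity, reducing the inequality to explicit polynomial inequalities in \(q,r\). These hold for all but finitely many small \((q,r)\), and the small cases are checked directly.

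\emph{Exceptional cases and the obstacle.} For \(n=2,4,6\), the same scheme identifies the failing break points \(q=2,3,5\), that is \(x=\pi/4,\pi/6,\pi/10\). Evaluating there gives the stated values, for example \(S_6(\pi/10)=\cos18^\circ+\cos36^\circ+\cos54^\circ+\cos72^\circ+0+\cos72^\circ\), which simplifies to the given radical. We then verify that all other break points with \(q\le n\) give larger values. The main obstacle is the third step in the range \(q\le n<2q\), and the mixed range with small \(m\), where the margins are thin. These are exactly where the exceptions \(n=2,4,6\) live. I would first obtain a clean closed form for \(A(q,r)\) and establish the inequality for \(q\ge 7\) uniformly. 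The finitely many remaining pairs \((q,r)\) would then be handled by explicit numerical evaluation with rigorous error bounds.
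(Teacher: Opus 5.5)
Your architecture matches the paper's: concavity reduction to break points, the residue permutation giving $\cot a$ per period ($a=\pi/(4q)$, $\theta=2a$), the split $n\ge 2q$ versus $q\le n<2q$, the bound $A(q,r)$, a uniform estimate for large $q$, and a finite check. But several steps, as you describe them, would fail.

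\textbf{The bound $A(q,r)$.} Your justification of $S_{q+r}(x_0)\ge A(q,r)$ does not produce that bound. ``Each fold class is hit at most twice'' is true, but it only lets you replace the terms by the smallest values counted twice. For $r=0$ this gives about $\frac{4q}{\pi}\bigl(1-\frac{\sqrt2}{2}\bigr)\approx0.37q<\lfloor q/2\rfloor$, for instance about $3.72<5$ at $q=10$. The missing fact is that $k=1,\dots,q-1$ hits every fold class \emph{exactly once}. Hence the first $q$ terms equal $\sum_{j=1}^{q-1}\cos(j\theta)=\frac12(\cot a-1)\approx 2q/\pi$ exactly. Only the $r$ further terms, which lie in distinct classes, are replaced by $\sin\theta,\dots,\sin(r\theta)$. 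So $A(q,r)$ is not a sum of cosines near $\pi/2$. Its bulk is a complete sum, and that is where the surplus over $(q+r)/2$ comes from.

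\textbf{The cotangent inequality.} $\cot t>1/t-t/3$ is false on $(0,\pi)$, since $t\cot t=1-t^2/3-t^4/45-\cdots$. At $t=\pi/4$ it would assert $1=\cot(\pi/4)>4/\pi-\pi/12\approx1.011$. Use $\cot t>(1-t^2/2)/t$ instead; it still gives $\cot(\pi/(4q))>q$ for $q\ge2$.

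\textbf{The long range.} This needs to be made precise. The partial block of length $s$ cannot be compared with $\lfloor s/2\rfloor$ by itself: for $s=2$ its available lower bound $\sin\theta+\sin2\theta$ is about $3\pi/(2q)$. Also, for $s<q$ that lower bound is $\sum_{j\le s}\sin(j\theta)$, not $A$. What you need is
$\cot a+\sum_{k\le s}|\cos(kx_0)|>q+s/2$ for every $0\le s<2q$.
In other words, one period's surplus, about $(4/\pi-1)q\approx0.27q$, must absorb a deficit of at most about $\frac q\pi\bigl(\frac\pi6-2+\sqrt3\bigr)\approx0.08q$. This holds for every $q\ge2$. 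The exceptions $n=2,4,6$ all have $n<2q$ and do not live in a mixed range.

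\textbf{The estimates themselves.} The paper proves both this and the short-range inequality by setting $y=(2s+1)a$ or $y=(2r+1)a$. After $\sin a<a$ and $\cos a>1-a^2/2$, the dependence on $s$ or $r$ collapses to a one-variable function such as $2-\cos y-y/2$, minimized at $y=\pi/6$. The margins are small: the uniform short-range bound closes only from $q=6$ on, with room of about $0.003$. These estimates are the real content of the proof, and your proposal asserts them rather than supplying them.
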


\section{Reduction to rational break points}\label{sec:reduction}

The reductions in this section and Section~\ref{sec:permutation} follow
the approach in~\cite{MSE2022}. We include their proofs to keep the
argument self-contained.

\begin{lemma}\label{lem:concavity}
Let \(n\ge 1\). Every global minimizer \(x_0\) of \(S_n\) is of the form
\[
        x_0=\frac{p\pi}{2q},
        \qquad
        1\le q\le n,
        \qquad
        (p,2q)=1.
\]
Here \(q\) may be chosen as the least positive integer for which \(\cos(qx_0)=0\).
\end{lemma}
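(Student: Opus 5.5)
The plan is to show that on every open interval avoiding the zeros of all the functions \(\cos(kx)\), \(1\le k\le n\), the function \(S_n\) is strictly concave. A global minimizer can therefore only sit at one of these zeros.

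\emph{Concavity between break points.} Let \(Z=\{x:\cos(kx)=0\text{ for some }1\le k\le n\}\); this set is discrete and periodic. On any component interval \(I\) of \(\mathbb R\setminus Z\), each \(\cos(kx)\) has constant sign \(\varepsilon_k\in\{\pm1\}\). Hence \(S_n(x)=\sum_k \varepsilon_k\cos(kx)\) on \(I\), and
\[
S_n''(x)=-\sum_{k=1}^n k^2\varepsilon_k\cos(kx)=-\sum_{k=1}^n k^2|\cos(kx)|<0 \qquad (x\in I),
\]
since no term vanishes on \(I\). So \(S_n\) is strictly concave on \(I\). A strictly concave function on an open interval has no local minimum in its interior: if \(x_0\in I\) were one, then for small \(h\) we would have \(S_n(x_0)\ge \tfrac12(S_n(x_0+h)+S_n(x_0-h))\), with strict inequality. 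This contradicts \(S_n(x_0\pm h)\ge S_n(x_0)\).

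\emph{Minimum exists and lies in \(Z\).} \(S_n\) is continuous and \(\pi\)-periodic, so a global minimum is attained. By the previous paragraph, any global minimizer \(x_0\) lies in \(Z\). Let \(q\) be the least positive integer with \(\cos(qx_0)=0\); then \(q\le n\). Writing \(qx_0=\pi/2+m\pi\) gives \(x_0=(2m+1)\pi/(2q)=p\pi/(2q)\) with \(p\) odd.

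\emph{Coprimality from minimality of \(q\).} It remains to check \((p,q)=1\). Since \(p\) is odd, this gives \((p,2q)=1\). Suppose \(d=(p,q)>1\). Then \(d\) is odd, because it divides the odd number \(p\). Write \(p=dp'\) and \(q=dq'\) with \(p'\) odd, so \(x_0=p'\pi/(2q')\). Then \(\cos(q'x_0)=\cos(p'\pi/2)=0\) with \(q'<q\), contradicting the minimality of \(q\).

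The only point needing care is the strict concavity argument, in particular that the sign pattern is constant on each interval of \(\mathbb R\setminus Z\) and that interior points cannot be minima. Both are routine, so I expect no real obstacle.
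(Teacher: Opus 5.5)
Your proof is correct and follows the paper's argument: $S_n$ is strictly concave between consecutive zeros of the functions $\cos(kx)$, and the minimality of $q$ then forces $(p,q)=1$. The only difference is that you work on the components of $\mathbb R\setminus Z$ rather than on subintervals of $[0,\pi]$. As a result, $0$ and $\pi$ are interior points of concavity intervals, so you need neither the paper's comparison with $S_n(\pi/2)$ nor its separate treatment of $n=1$.
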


\begin{proof}
Since \(S_n\) is \(\pi\)-periodic, it is enough to consider \(x\in[0,\pi]\). The zeros of the functions \(\cos(kx)\), \(1\le k\le n\), divide this interval into finitely many subintervals. On any open subinterval where no \(\cos(kx)\) vanishes, all signs are fixed, so
\[
        S_n(x)=\sum_{k=1}^n \varepsilon_k\cos(kx),
        \qquad \varepsilon_k\in\{-1,1\}.
\]
On that subinterval,
\[
        S_n''(x)
        =-\sum_{k=1}^n k^2|\cos(kx)|<0.
\]
Thus \(S_n\) is strictly concave on each such open interval, and a minimum on the corresponding closed interval occurs at an endpoint.

The endpoints \(0\) and \(\pi\) give \(S_n(0)=S_n(\pi)=n\), while
\[
        S_n\!\left(\frac{\pi}{2}\right)
        =\left\lfloor\frac n2\right\rfloor<n
\]
for \(n\ge2\). The case \(n=1\) is immediate. Hence, for \(n\ge2\), a global minimizer is a point at which \(\cos(qx_0)=0\) for at least one \(q\in\{1,\dots,n\}\).

Choose the least such \(q\). Then \(qx_0=p\pi/2\) for an odd integer \(p\), so
\[
        x_0=\frac{p\pi}{2q}.
\]
If \(d=(p,q)>1\), then \(d\) is odd and
\[
        \frac qd x_0=\frac{p/d}{2}\pi
\]
is an odd multiple of \(\pi/2\), contradicting the minimality of \(q\). Therefore \((p,q)=1\). Since \(p\) is odd, \((p,2q)=1\).
\end{proof}

The case \(q=1\) in Lemma~\ref{lem:concavity} is exactly
\[
        x_0\equiv \frac{\pi}{2}\pmod{\pi},
\]
and then \(S_n(x_0)=\lfloor n/2\rfloor\). It remains to show that all candidates with \(q\ge2\) are larger, except for the stated exceptional cases.

\section{\texorpdfstring{The permutation modulo \(2q\)}{The permutation modulo 2q}}\label{sec:permutation}

Throughout this section let
\[
        x_0=\frac{p\pi}{2q},\qquad (p,2q)=1,\qquad q\ge2,
\]
and put
\[
        \theta=\frac{\pi}{2q},
        \qquad
        a=\frac{\theta}{2}=\frac{\pi}{4q}.
\]

\begin{lemma}\label{lem:permutation}
As multisets,
\[
        \bigl\{|\cos(kx_0)|:1\le k\le q-1\bigr\}
        =
        \bigl\{\cos(k\theta):1\le k\le q-1\bigr\}.
\]
Consequently
\[
        \sum_{k=1}^{q}|\cos(kx_0)|
        =
        \sum_{k=1}^{q-1}\cos(k\theta)
        =
        \frac12\left(\cot a-1\right),
\]
and a full period satisfies
\[
        \sum_{k=1}^{2q}|\cos(kx_0)|=\cot a.
\]
\end{lemma}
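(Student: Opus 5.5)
The plan is to reduce everything to a statement about residues modulo \(2q\). Write \(kx_0=kp\theta\) with \(\theta=\pi/(2q)\). The value \(|\cos(m\theta)|\) depends only on the class of \(m\) modulo \(2q\), and even only on that class up to sign, since \(|\cos((2q-m)\theta)|=|\cos(m\theta)|\). Indeed \(\cos(m\theta+\pi)=-\cos(m\theta)\) and \(2q\theta=\pi\), so the value is also unchanged under \(m\mapsto m+q\) up to sign. Hence \(|\cos(m\theta)|\) depends only on \(m\bmod q\) up to \(m\mapsto -m\), that is, on the folded residue \(\rho(m)\in\{0,1,\dots,\lfloor q/2\rfloor\}\). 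This is the distance from \(m\) to the nearest multiple of \(q\). Writing \(m=jq\pm\rho\), we get \(|\cos(m\theta)|=|\cos(j\pi/2\pm\rho\theta)|\). This equals \(|\cos\rho\theta|\) when \(j\) is even and \(|\sin\rho\theta|\) when \(j\) is odd. So the cleaner invariant to track is \(m\bmod 2q\) itself, not the folded residue.

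The key step is to observe that, since \((p,2q)=1\), the map \(k\mapsto kp\) permutes the residues modulo \(2q\). For \(1\le k\le q-1\) the residues \(kp\bmod 2q\) are distinct and nonzero, and none of them equals \(q\), because \(kp\equiv q\) would force \(q\mid k\). Also \(k\) and \(2q-k\) give opposite residues \(\pm kp\). The set \(\{1,\dots,2q-1\}\setminus\{q\}\) splits into the pairs \(\{r,2q-r\}\), and within each pair the value \(|\cos(r\theta)|\) is the same. I will show that \(\{kp\bmod 2q:1\le k\le q-1\}\) meets each pair \(\{r,2q-r\}\), \(1\le r\le q-1\), exactly once. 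This holds because \(kp\equiv\pm k'p\) forces \(k\equiv\pm k'\pmod{2q}\), which is impossible for distinct \(k,k'\in[1,q-1]\) unless \(k=k'\).

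With that, the multiset \(\{|\cos(kx_0)|\}\) equals \(\{|\cos(r\theta)|:1\le r\le q-1\}\). For \(1\le r\le q-1\) we have \(r\theta\in(0,\pi/2)\), so \(|\cos(r\theta)|=\cos(r\theta)\), which proves the first claim. The \(k=q\) term vanishes since \(qx_0=p\pi/2\) with \(p\) odd.

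For the closed form, I will use the Dirichlet-type sum
\[
\sum_{k=1}^{q-1}\cos(2ka)=\frac{\sin((2q-1)a)}{2\sin a}-\frac12 .
\]
Since \((2q-1)a=\pi/2-a\), this becomes \(\tfrac12\cot a-\tfrac12\). For the full period, the terms \(k\) and \(2q-k\) have equal absolute cosine, because \(2qx_0=p\pi\). The \(k=q\) term is \(0\) and the \(k=2q\) term is \(1\). So the sum over a full period is \(2\cdot\tfrac12(\cot a-1)+1=\cot a\).

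The only delicate point is the pairing argument above, and it is a short congruence check.
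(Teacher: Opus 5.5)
Your proof is correct and follows the paper's route: multiplication by \(p\) permutes residues modulo \(2q\), folding via \(m\leftrightarrow 2q-m\) is injective on \(\{1,\dots,q-1\}\) because \(pk\equiv\pm p\ell\) forces \(k\equiv\pm\ell\pmod{2q}\), and the Dirichlet sum together with the pairing \(k\leftrightarrow 2q-k\) gives both closed forms. One flaw to remove: the claim in your first paragraph that \(|\cos(m\theta)|\) is invariant under \(m\mapsto m+q\) is false, since that shift adds \(\pi/2\) and turns \(\cos\) into \(-\sin\) (your own \(m=jq\pm\rho\) computation shows this), so delete that paragraph; nothing later depends on it.
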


\begin{proof}
Multiplication by \(p\) permutes the residue classes modulo \(2q\). If \(1\le k\le q-1\), then \(pk\not\equiv 0,q\pmod{2q}\). Fold the residue \(pk\) into \(\{1,\dots,q-1\}\) using the symmetry
\[
        |\cos(m\theta)|=|\cos((2q-m)\theta)|.
\]
This gives a map from \(\{1,\dots,q-1\}\) into itself. If two indices \(k,\ell\) have the same folded residue, then
\[
        pk\equiv \pm p\ell \pmod{2q}.
\]
Since \((p,2q)=1\), this gives \(k\equiv\pm \ell\pmod{2q}\). For \(1\le k,\ell<q\), the negative case would imply \(k+\ell=2q\), impossible. Hence \(k=\ell\), so the folded residues form a permutation.

The formula for the first \(q\) terms follows because \(\cos(qx_0)=0\). The trigonometric identity
\[
        \sum_{k=1}^{q-1}\cos(k\theta)
        =\frac12\left(\cot\frac{\theta}{2}-1\right)
\]
then gives the displayed value. Finally, over a complete period modulo \(2q\), the residue \(0\) contributes \(1\), the residue \(q\) contributes \(0\), and all other folded residues occur twice. Therefore
\[
        \sum_{k=1}^{2q}|\cos(kx_0)|
        =
        1+2\sum_{k=1}^{q-1}\cos(k\theta)
        =\cot a.
\]
\end{proof}

For \(0\le r<q\), define
\[
        A(q,r)
        =
        \sum_{j=1}^{q-1}\cos(j\theta)
        +
        \sum_{j=1}^{r}\sin(j\theta).
\]
Using the preceding notation,
\[
        A(q,r)
        =
        \frac{2\cos a-\cos((2r+1)a)}{2\sin a}
        -\frac12.
        \tag{1}\label{eq:Aformula}
\]

\begin{lemma}\label{lem:short-lower}
If \(n=q+r\) with \(0\le r<q\), then
\[
        S_n(x_0)\ge A(q,r).
\]
\end{lemma}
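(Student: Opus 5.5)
We split \(S_n(x_0)\) into the first \(q\) terms and the remaining \(r\) terms, and bound the tail below by the \(r\) smallest values that can possibly occur.

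By Lemma~\ref{lem:permutation}, the first \(q\) terms contribute exactly
\[
        \sum_{k=1}^{q}|\cos(kx_0)|=\sum_{j=1}^{q-1}\cos(j\theta).
\]
It therefore suffices to show
\[
        \sum_{k=q+1}^{q+r}|\cos(kx_0)|\ \ge\ \sum_{j=1}^{r}\sin(j\theta).
\]

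Write \(k=q+\ell\) with \(1\le \ell\le r<q\). Each \(|\cos(kx_0)|\) equals \(|\cos(m\theta)|\), where \(m\) is the residue of \(pk\) modulo \(2q\). Fold this residue into \(\{0,1,\dots,q\}\) using \(|\cos(m\theta)|=|\cos((2q-m)\theta)|\). Writing the folded residue as \(q-d\) with \(0\le d\le q\), we get
\[
        |\cos(kx_0)|=|\cos((q-d)\theta)|=\sin(d\theta),
\]
because \(q\theta=\pi/2\) and \(d\theta\in[0,\pi/2]\).

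\emph{The values of \(d\) are nonzero and pairwise distinct.}
\begin{itemize}
\item Nonzero: \(d=0\) would mean \(pk\equiv q\pmod{2q}\). Since \(p\) is odd, this forces \(k\equiv q\pmod{2q}\), which is impossible because \(q<k<2q\).
\item Distinct: two indices \(k,k'\) in the tail with the same \(d\) satisfy \(pk\equiv\pm pk'\pmod{2q}\), hence \(k\equiv\pm k'\pmod{2q}\). The minus sign would give \(k+k'\equiv 0\pmod{2q}\). But \(2q<k+k'<4q\), so \(k+k'\) cannot be a multiple of \(2q\). Hence \(k=k'\).
\end{itemize}
This is the same argument as in Lemma~\ref{lem:permutation}, applied to the shifted block \(q+1,\dots,2q-1\), which likewise avoids the residues \(0\) and \(q\).

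So the tail is a sum of \(\sin(d_\ell\theta)\) over \(r\) distinct integers \(d_\ell\in\{1,\dots,q\}\). Since \(\sin(d\theta)\) is increasing on \(d\in[0,q]\), the sum of \(r\) distinct such values is at least the sum of the \(r\) smallest, namely \(\sum_{j=1}^{r}\sin(j\theta)\). Adding the first \(q\) terms gives \(S_n(x_0)\ge A(q,r)\).

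The main obstacle is the distinctness step, in particular making the folding convention precise so that \(d=q\) (residue \(0\), value \(1\)) is covered. That value is simply \(\sin(q\theta)=1\), so it fits into the same monotonicity argument. Formula~\eqref{eq:Aformula} is not needed here; it only rewrites the bound.
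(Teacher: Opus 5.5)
Your proof is correct and follows essentially the same route as the paper: the first $q$ terms come from Lemma~\ref{lem:permutation}, and the folding argument shows that the $r$ tail values are distinct values of the increasing function $d\mapsto\sin(d\theta)$, so their sum is at least the sum of the $r$ smallest. The differences are cosmetic: the paper first rewrites $|\cos((q+j)x_0)|=|\sin(jp\theta)|$ and then folds $jp$, whereas you fold $p(q+\ell)$ directly, and allowing $d=q$ is harmless (it cannot actually occur, since $pk\not\equiv 0\pmod{2q}$ for $q<k<2q$).
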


\begin{proof}
The first \(q\) terms are given by Lemma~\ref{lem:permutation}. For \(1\le j\le r\),
\[
        |\cos((q+j)x_0)|
        =
        |\sin(jx_0)|
        =
        |\sin(jp\theta)|.
\]
The same folding argument as in Lemma~\ref{lem:permutation} shows that these are \(r\) distinct elements of
\[
        \{\sin\theta,\sin2\theta,\dots,\sin((q-1)\theta)\}.
\]
Since these numbers are strictly increasing, the sum of any \(r\) of them is at least the sum of the \(r\) smallest ones. Hence
\[
        \sum_{j=1}^{r}|\cos((q+j)x_0)|
        \ge
        \sum_{j=1}^{r}\sin(j\theta),
\]
which proves the lemma.
\end{proof}

\section{\texorpdfstring{The long range \(n\ge 2q\)}{The long range n >= 2q}}

\begin{lemma}\label{lem:long}
Let \(q\ge2\), \(x_0=p\pi/(2q)\), and \((p,2q)=1\). If \(n\ge2q\), then
\[
        S_n(x_0)>\frac n2.
\]
\end{lemma}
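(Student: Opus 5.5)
We will write \(n=2qm+s\) with \(m\ge1\) and \(0\le s<2q\). Then we treat the complete periods and the remainder separately. By Lemma~\ref{lem:permutation} and the \(2q\)-periodicity of \(k\mapsto|\cos(kx_0)|\), each complete period contributes exactly \(\cot a\). It therefore suffices to show
\[
        m\cot a+\sum_{k=1}^{s}|\cos(kx_0)| > \frac{2qm+s}{2}.
\]
The key inequality is \(\cot a>2q\cdot\frac12\cdot c\) for some margin, where \(a=\pi/(4q)\). The estimate \(\tan a<\) something for \(a\le\pi/8\) gives \(\cot a> 1/a\cdot(1-a^2/3)\ge 4q/\pi-\pi/(12q)\). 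Since \(4/\pi>1\), we get \(\cot a-q\ge q(4/\pi-1)-\pi/(12q)\), which is positive for all \(q\ge2\) (at \(q=2\), \(\cot(\pi/8)=1+\sqrt2\approx2.414>2\)). Each full period therefore beats its share \(q\) by a definite surplus \(\delta_q=\cot a-q\), and this surplus grows roughly like \(0.27q\).

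It remains to control the partial block of length \(s\), where we need
\[
        \sum_{k=1}^{s}|\cos(kx_0)|\ge \frac s2-m\delta_q,
\]
and it suffices to take \(m=1\). We split on \(s\).

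\emph{Case \(s\ge q\).} Write \(s=q+r\) and apply Lemma~\ref{lem:short-lower}, which gives the lower bound \(A(q,r)\). We then show \(A(q,r)+\delta_q>(q+r)/2\) using formula \eqref{eq:Aformula}. The function \(r\mapsto A(q,r)-(q+r)/2\) has increments \(\sin(j\theta)-\frac12\), so it decreases and then increases. Its minimum is at \(r\approx q/3\) and can be checked against \(\delta_q\).

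\emph{Case \(s<q\).} The first \(s\) terms are \(s\) distinct folded values among \(\cos(j\theta)\), \(0<j<q\) (by the argument of Lemma~\ref{lem:permutation}), possibly preceded by the zero term at index \(q\), which does not occur here. Their sum is at least the sum of the \(s\) smallest, \(\sum_{j=q-s}^{q-1}\cos(j\theta)=\sum_{i=1}^{s}\sin(i\theta)\). So we need \(\sum_{i=1}^{s}\sin(i\theta)+\delta_q>s/2\). This is the same function of \(s\) as in the previous case, shifted.

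The main obstacle is this uniform comparison. We must show that \(\min_{0\le r<q}\bigl(\sum_{i\le r}\sin(i\theta)-r/2\bigr)\), which is about \(-q\cdot c_0\) with \(c_0=\frac{\sqrt3-1}{\pi}\cdot\ldots\), is strictly bigger than \(-\delta_q\), and also the analogous statement with \(A(q,r)\). Asymptotically this amounts to comparing \(\int_0^{1/3}(\tfrac12-\sin(\pi t/2))\,dt\approx0.08\) with \(4/\pi-1\approx0.27\), which holds with room to spare. So the plan is to use the closed forms \(\sum_{i\le r}\sin(i\theta)=\frac{\cos a-\cos((2r+1)a)}{2\sin a}\) and \(\cot a\), bound \(\sin a\) and \(\cos\) by Taylor inequalities to obtain an explicit inequality in \(q\), and verify the small values \(q=2,3,4\) directly by computation. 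Strictness follows because the bound \(\cot a>q+\text{(deficit)}\) is strict.
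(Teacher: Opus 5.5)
Your reduction matches the paper's: one complete period plus a remainder block, with \(\delta_q>0\) letting you take \(m=1\). Everything then comes down to \(\cot a+\sum_{k=1}^{s}|\cos(kx_0)|>q+\frac s2\) for \(0\le s<2q\), split into \(s<q\) (the \(s\) smallest folded sines) and \(s\ge q\) (the bound \(A(q,r)\)).

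\textbf{The gap.} You never prove this inequality, and it is the entire content of the lemma. Comparing \(\int_0^{1/3}(\frac12-\sin\frac{\pi t}{2})\,dt\approx0.08\) with \(\frac4\pi-1\approx0.27\) is a leading-order heuristic; it yields no inequality at any specific \(q\). You also do not derive the explicit Taylor-based inequality or say from which \(q\) it holds, so ``check \(q=2,3,4\) by computation'' has nothing to attach to.

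\textbf{A false bound.} The one explicit estimate you write down is wrong. Since \(\cot a=\frac1a-\frac a3-\frac{a^3}{45}-\cdots\) with all later terms negative, in fact \(\cot a<\frac1a(1-\frac{a^2}{3})\) for every \(0<a<\pi\). At \(a=\pi/8\), \(\cot a=1+\sqrt2\approx2.4142<2.4156\approx\frac8\pi-\frac{\pi}{24}\). The conclusion \(\delta_q>0\) still holds: \(\sin a<a\) and \(\cos a>1-\frac{a^2}{2}\) give \(\cot a>\frac1a-\frac a2\), hence \(\delta_q>q(\frac4\pi-1)-\frac{\pi}{8q}>0\) for \(q\ge2\). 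But that is not the bound you stated.

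\textbf{How the paper closes it.} Write the target in the variables \(a\) and \(y\).
\begin{itemize}
\item For \(s<q\), put \(y=(2s+1)a\). The left side is at least \(\frac{3\cos a-\cos y}{2\sin a}\), and \(q+\frac s2=\frac{\pi+y-a}{4a}\).
\item For \(s=q+t\), put \(y=(2t+1)a\). The left side is at least \(\frac{4\cos a-\cos y}{2\sin a}-\frac12\).
\end{itemize}
Applying \(\sin a<a\) and \(\cos a>1-\frac{a^2}{2}\), the two cases reduce to
\[
3-\cos y-\frac y2>\frac\pi2-\frac a2+\frac32a^2,
\qquad
4-\cos y-\frac y2>\frac{3\pi}4+\frac a2+2a^2,
\]
for \(0<y<\frac\pi2\) and \(0<a\le\frac\pi8\). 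The left sides are minimized exactly at \(y=\pi/6\), which is the continuous form of your minimum at \(r\approx q/3\). The right sides are largest at \(a=\pi/8\). So two numerical checks settle every \(q\ge2\) at once, with \(\cot a>q\) falling out as the case \(s=0\) and no small-\(q\) computations needed.

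Note that after these Taylor bounds the second check has margin only about \(0.011\) at \(a=\pi/8\). The asymptotic ``room to spare'' does not by itself deliver the finite-\(q\) statement; the explicit step has to be carried out.
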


\begin{proof}
Write
\[
        n=2q\ell+s,\qquad 0\le s<2q,\qquad \ell\ge1.
\]
By periodicity modulo \(2q\),
\[
        S_n(x_0)=(\ell-1)\sum_{k=1}^{2q}|\cos(kx_0)|
        +
        \left(
        \sum_{k=1}^{2q}|\cos(kx_0)|
        +
        \sum_{k=1}^{s}|\cos(kx_0)|
        \right).
\]
It is enough to prove, for \(0\le s<2q\),
\[
        \cot a+\sum_{k=1}^{s}|\cos(kx_0)|
        > q+\frac s2.
        \tag{2}\label{eq:long-target}
\]
Indeed, \(\cot a>q\) follows from the same proof below with \(s=0\), and then \eqref{eq:long-target} gives \(S_n(x_0)>n/2\).

First suppose \(0\le s<q\). By the permutation argument, the sum of the first \(s\) terms is at least the sum of the \(s\) smallest elements among
\[
        \{\cos\theta,\dots,\cos((q-1)\theta)\},
\]
namely
\[
        \sum_{j=1}^{s}\sin(j\theta).
\]
Set \(y=(2s+1)a\). Then
\[
        \cot a+\sum_{j=1}^{s}\sin(2ja)
        =
        \frac{3\cos a-\cos y}{2\sin a}.
\]
Since
\[
        q+\frac s2=\frac{\pi+y-a}{4a},
\]
it is enough, using \(\sin a<a\) and \(\cos a>1-a^2/2\), to prove
\[
        3-\cos y-\frac y2
        >
        \frac{\pi}{2}-\frac a2+\frac32a^2.
        \tag{3}\label{eq:long-first-case}
\]
On \(0\le y\le\pi/2\), the function \(3-\cos y-y/2\) is minimized at \(y=\pi/6\). Also \(0<a\le\pi/8\), and the right-hand side of \eqref{eq:long-first-case} is bounded above by its larger endpoint value on this interval. Hence it is enough to check
\[
        3-\frac{\sqrt3}{2}
        -\frac{25\pi}{48}
        -\frac{3\pi^2}{128}>0.
\]
Using \(\sqrt3<7/4\) and \(\pi<22/7\), the left-hand side is greater than
\[
        3-\frac78-\frac{275}{168}-\frac{363}{1568}
        =
        \frac{1207}{4704}>0.
\]

It remains to treat \(q\le s<2q\). Write \(s=q+t\), \(0\le t<q\), and set \(y=(2t+1)a\). The first \(q\) terms contribute \((\cot a-1)/2\), and the remaining \(t\) terms are bounded below by \(\sum_{j=1}^t\sin(j\theta)\). Thus
\[
        \cot a+\sum_{k=1}^{s}|\cos(kx_0)|
        \ge
        \frac{4\cos a-\cos y}{2\sin a}-\frac12.
\]
Arguing as before, it is enough to prove
\[
        4-\cos y-\frac y2
        >
        \frac{3\pi}{4}+\frac a2+2a^2.
\]
The left-hand side is minimized at \(y=\pi/6\), and \(a\le\pi/8\). Hence it is enough to verify
\[
        4-\frac{\sqrt3}{2}
        -\frac{43\pi}{48}
        -\frac{\pi^2}{32}>0.
\]
Again using \(\sqrt3<7/4\) and \(\pi<22/7\), the left-hand side is greater than
\[
        4-\frac78-\frac{473}{168}-\frac{121}{392}
        =
        \frac1{1176}>0.
\]
This proves \eqref{eq:long-target} in both cases.
\end{proof}

\section{\texorpdfstring{The short range \(q\le n<2q\)}{The short range q <= n < 2q}}

\begin{lemma}\label{lem:short}
Let \(q\ge2\) and \(0\le r<q\). If \((q,r)\notin\{(2,0),(3,1),(5,1)\}\), then
\[
        A(q,r)>\left\lfloor\frac{q+r}{2}\right\rfloor .
\]
For the three excluded pairs,
\[
        A(2,0)=\frac1{\sqrt2},\qquad
        A(3,1)=1+\frac{\sqrt3}{2},
\]
and
\[
        A(5,1)=
        \frac{-1+3\sqrt5+2\sqrt{5+2\sqrt5}}4.
\]
\end{lemma}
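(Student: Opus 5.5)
The plan is to work from the closed form \eqref{eq:Aformula},
\[
A(q,r)=\frac{2\cos a-\cos((2r+1)a)}{2\sin a}-\frac12,\qquad a=\frac{\pi}{4q},
\]
and compare it with \(\lfloor (q+r)/2\rfloor\le (q+r)/2\). It suffices to prove \(A(q,r)>(q+r)/2\) whenever this holds, and to treat the remaining cases with the sharper target \(\lfloor (q+r)/2\rfloor\). As in Lemma~\ref{lem:long}, set \(y=(2r+1)a\in(0,\pi/2)\). Then \(q+r=(\pi+y-a)/(4a)\). Using \(\sin a<a\) and \(\cos a>1-a^2/2\) (note \(2\cos a-\cos y>0\)), the inequality \(A(q,r)>(q+r)/2\) is implied by
\[
2-\cos y-\frac y2>\frac{\pi}{4}+\frac a2+a^2 .
\]
On \([0,\pi/2]\) the left side \(f(y)=2-\cos y-y/2\) has its minimum at \(y=\pi/6\) or at the endpoint \(y\to0\). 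Since \(f\) is concave, the minimum sits at an endpoint: \(f(0)=1\) and \(f(\pi/2)=2-\pi/4\approx1.21\). So \(f\ge1\). The right side is \(\pi/4+a/2+a^2\approx0.785+a/2+a^2\), and this is \(<1\) once \(a\) is small enough, roughly \(a<0.33\), i.e. \(q\ge3\). This settles \(q\ge3\) for all \(r\) with margin, apart from some care near \(y\to0\), which only occurs when \(r=0\) and \(q\) is large. In that regime \(a\) is tiny and the margin \(1-\pi/4\) is comfortable. I would redo this bound carefully with rational bounds for \(\pi\), as in Lemma~\ref{lem:long}, finding the threshold \(q_0\) beyond which the crude inequality holds.

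The main obstacle is the finitely many small \(q\) below the threshold, together with the loss from replacing \(\lfloor (q+r)/2\rfloor\) by \((q+r)/2\). This loss is irrelevant for the strict inequality but matters for the three exceptional pairs. For \(q=2,3,4,5\) (and a few more if the crude estimate needs them) and every \(0\le r<q\), I would compute \(A(q,r)=\sum_{j<q}\cos(j\theta)+\sum_{j\le r}\sin(j\theta)\) exactly using known values of \(\cos,\sin\) at multiples of \(\pi/4,\pi/6,\pi/8,\pi/10\). Each value is then compared with \(\lfloor (q+r)/2\rfloor\) using simple rational bounds for \(\sqrt2,\sqrt3,\sqrt5\). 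For example, \(A(3,0)=\cos(\pi/6)+\cos(\pi/3)\approx1.37>1\) and \(A(3,2)\approx2.37>2\). The three pairs \((2,0),(3,1),(5,1)\) are precisely those where the value falls below the floor: \(1/\sqrt2<1\), \(1+\sqrt3/2<2\), and \(A(5,1)\approx2.92<3\).

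For the exceptional values themselves I would sum directly. \(A(2,0)=\cos(\pi/4)=1/\sqrt2\). \(A(3,1)=\cos(\pi/6)+\cos(\pi/3)+\sin(\pi/6)=1+\sqrt3/2\). Finally,
\[
A(5,1)=\sum_{j=1}^4\cos\frac{j\pi}{10}+\sin\frac{\pi}{10},
\]
which I would evaluate from \(\cos(\pi/5)=(1+\sqrt5)/4\), \(\cos(2\pi/5)=(\sqrt5-1)/4\), \(\sin(\pi/10)=(\sqrt5-1)/4\) and \(\cos(\pi/10)+\cos(3\pi/10)=2\cos(\pi/5)\cos(\pi/10)\), with \(\cos(\pi/10)=\sqrt{10+2\sqrt5}/4\). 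Simplifying gives the stated form. One can check numerically that \(\sqrt{5+2\sqrt5}=\sqrt{10+2\sqrt5}\,(1+\sqrt5)/(2\sqrt2\cdot\ldots)\), and I would confirm the identity by squaring.
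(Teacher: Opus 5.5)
Your large-$q$ step fails as written. The function $f(y)=2-\cos y-\tfrac y2$ is convex, not concave: $f''(y)=\cos y\ge 0$ on $[0,\pi/2]$, and $f'(y)=\sin y-\tfrac12$ vanishes only at $y=\pi/6$. That point is therefore the minimum, with
\[
\min_{0\le y\le \pi/2} f(y)=2-\frac{\sqrt3}{2}-\frac{\pi}{12}\approx 0.872<1 .
\]
So the bound $f\ge 1$ is false; already $f(\pi/4)=2-\tfrac{\sqrt2}{2}-\tfrac{\pi}{8}\approx 0.90$. The dangerous regime is $y$ near $\pi/6$, i.e.\ $r\approx q/3$, not $y\to 0$.

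Your conclusion that the crude inequality settles all $q\ge 3$ is also false. It would give $A(q,r)>(q+r)/2$ for $(q,r)=(3,1)$ and $(5,1)$, i.e.\ $A(3,1)>2$ and $A(5,1)>3$. That contradicts the exceptional values you compute at the end.

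With the correct minimum, the sufficient condition $\tfrac{\pi}{4}+\tfrac a2+a^2<2-\tfrac{\sqrt3}{2}-\tfrac{\pi}{12}$ holds, among $a=\pi/(4q)$, exactly for $q\ge 6$. Even at $q=6$ the margin is only about $0.004$, so you must verify
\[
2-\frac{\sqrt3}{2}-\frac{17\pi}{48}-\frac{\pi^2}{576}>0
\]
with reasonably sharp bounds. For instance $\sqrt3<26/15$ and $\pi<22/7$ suffice, whereas $\sqrt3<7/4$ does not.

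After this correction, your plan to compute $A(q,r)$ exactly for $q=2,3,4,5$ covers precisely the cases the analytic estimate misses. The proof then coincides with the paper's: the estimate for $q\ge 6$, plus a finite table for $q\le 5$. The tightest table entry is $(4,0)$, where $A(4,0)=\tfrac12\bigl(\sqrt{4+2\sqrt2}+\sqrt2\bigr)\approx 2.014$; squaring reduces $A(4,0)>2$ to $\sqrt2>7/5$.

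Smaller slips:
- You wrote $q+r=(\pi+y-a)/(4a)$, but $q+r=(\pi+2y-2a)/(4a)$. Your expression is $q+r/2$, carried over from the long-range lemma. Your final reduced inequality is nevertheless the right one.
- Numerically, $A(3,2)=1+\sqrt3\approx 2.73$ and $A(5,1)\approx 2.966$.

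Your route to $A(5,1)$ is fine: $\cos\tfrac{\pi}{10}+\cos\tfrac{3\pi}{10}=2\cos\tfrac{\pi}{5}\cos\tfrac{\pi}{10}=\tfrac12\sqrt{5+2\sqrt5}$. Equivalently, the identity to confirm is $\sqrt{5+2\sqrt5}=\tfrac14(1+\sqrt5)\sqrt{10+2\sqrt5}$, which squaring verifies.
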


\begin{proof}
First assume \(q\ge6\). Let \(a=\pi/(4q)\) and \(y=(2r+1)a\). By \eqref{eq:Aformula}, the inequality
\[
        A(q,r)>\frac{q+r}{2}
\]
is equivalent to
\[
        2\cos a-\cos y>(q+r+1)\sin a.
\]
Since
\[
        q=\frac{\pi}{4a},
        \qquad
        r=\frac{y/a-1}{2},
\]
we have
\[
        q+r+1=\frac{\pi+2y+2a}{4a}.
\]
Using \(\sin a<a\) and \(\cos a>1-a^2/2\), it is enough to prove
\[
        2-\cos y-\frac y2
        >
        \frac{\pi}{4}+\frac a2+a^2.
        \tag{4}\label{eq:short-core}
\]
The function \(2-\cos y-y/2\) is minimized on \(0\le y\le\pi/2\) at \(y=\pi/6\). Since \(q\ge6\), \(a\le\pi/24\). Thus \eqref{eq:short-core} follows from
\[
        2-\frac{\sqrt3}{2}
        -\frac{17\pi}{48}
        -\frac{\pi^2}{576}>0.
\]
With \(\sqrt3<26/15\) and \(\pi<22/7\), the left-hand side is greater than
\[
        2-\frac{13}{15}-\frac{187}{168}-\frac{121}{7056}
        =
        \frac{109}{35280}>0.
\]
Hence \(A(q,r)>(q+r)/2\), and in particular it is larger than \(\lfloor(q+r)/2\rfloor\).

It remains to check \(2\le q\le5\). Direct evaluation gives
\[
\begin{array}{c|c|c}
(q,r) & A(q,r) & \text{comparison}\\
\hline
(2,0) & 1/\sqrt2 & <1\\
(2,1) & \sqrt2 & >1\\
(3,0) & (1+\sqrt3)/2 & >1\\
(3,1) & 1+\sqrt3/2 & <2\\
(3,2) & 1+\sqrt3 & >2\\
(4,0) & \sum_{j=1}^{3}\cos(j\pi/8) & >2\\
(4,1) & A(4,0)+\sin(\pi/8) & >2\\
(4,2) & A(4,1)+\sin(\pi/4) & >3\\
(4,3) & 2A(4,0) & >3\\
(5,0) & \sum_{j=1}^{4}\cos(j\pi/10) & >2\\
(5,1) & C_5+\sin(\pi/10) & <3\\
(5,2) & A(5,1)+\sin(\pi/5) & >3\\
(5,3) & A(5,2)+\sin(3\pi/10) & >4\\
(5,4) & 2C_5 & >4
\end{array}
\]
where \(C_5=\sum_{j=1}^{4}\cos(j\pi/10)\). For completeness, the
comparisons for \(q=4,5\) can be checked using the following exact
bounds. Write \(s=\sqrt2\). Then
\[
        A(4,0)=\frac{\sqrt{4+2s}+s}{2}>2,
        \qquad
        \sin\frac\pi8>\frac38,
        \qquad
        \sin\frac\pi4>\frac7{10}.
\]
The first inequality follows by squaring from \(s>7/5\); the others
follow from \(7/5<s<23/16\). Thus
\(A(4,2)>2+3/8+7/10>3\), and the other comparisons for \(q=4\)
follow by positivity.

Next write \(t=\sqrt5\) and \(u=\sqrt{5+2t}\). The exact values give
\[
        C_5=\frac{t+u}{2}>\frac52,\qquad
        \sin\frac\pi{10}>\frac14,\qquad
        \sin\frac\pi5>\frac12,\qquad
        \sin\frac{3\pi}{10}>\frac34.
\]
These bounds imply all the lower comparisons for \(q=5\). Also,
\(t<9/4\) and \(u<25/8\) give
\(A(5,1)=(-1+3t+2u)/4<3\). In particular,
\[
        C_5+\sin(\pi/10)
        =
        \cos\frac{\pi}{10}
        +\cos\frac{\pi}{5}
        +\cos\frac{3\pi}{10}
        +2\cos\frac{2\pi}{5}
\]
equals
\[
        \frac{-1+3\sqrt5+2\sqrt{5+2\sqrt5}}4.
\]
This proves the lemma.
\end{proof}

\section{Proof of the theorem}

\begin{proof}[Proof of Theorem~\ref{thm:main}]
The function \(S_n\) is continuous and \(\pi\)-periodic, so a global
minimizer exists. Let \(x_0\) be such a minimizer and choose \(q\) as
in Lemma~\ref{lem:concavity}.

If \(q=1\), then
\[
        x_0\equiv \frac{\pi}{2}\pmod{\pi}
\]
and
\[
        S_n(x_0)=\left\lfloor\frac n2\right\rfloor.
\]

Now assume \(q\ge2\). If \(n\ge2q\), Lemma~\ref{lem:long} gives
\[
        S_n(x_0)>\frac n2\ge \left\lfloor\frac n2\right\rfloor.
\]
Since \(S_n(\pi/2)=\lfloor n/2\rfloor\), no global minimizer can
occur in the long range.

It remains to consider the short range \(q\le n<2q\). Write
\[
        n=q+r,\qquad 0\le r<q.
\]
By Lemma~\ref{lem:short-lower},
\[
        S_n(x_0)\ge A(q,r).
\]
By Lemma~\ref{lem:short}, this is strictly larger than \(\lfloor n/2\rfloor\), except possibly for
\[
        (q,r)=(2,0),(3,1),(5,1),
\]
which correspond exactly to
\[
        n=2,\qquad n=4,\qquad n=6.
\]
Thus, for every \(n\notin\{2,4,6\}\), the only minimizers are the points with \(q=1\), namely
\[
        x\equiv\frac{\pi}{2}\pmod{\pi},
\]
and
\[
        M_n=\left\lfloor\frac n2\right\rfloor.
\]

It remains only to record the exceptional cases. For \(n=2\), the candidates from Lemma~\ref{lem:concavity} have \(q=1\) or \(q=2\). The value at \(q=1\) is \(1\), while
\[
        S_2\!\left(\frac{\pi}{4}\right)
        =
        \frac1{\sqrt2}.
\]
Hence
\[
        M_2=\frac1{\sqrt2},
        \qquad
        x\equiv\pm\frac{\pi}{4}\pmod{\pi}.
\]

For \(n=4\), the only possible candidates with value below \(2\)
come from the short-range pair \((q,r)=(3,1)\). Here
\((p,6)=1\) means \(p\equiv\pm1\pmod6\), and direct substitution
gives \(S_4(\pi/6)=A(3,1)\). Consequently,
\[
        M_4=A(3,1)=1+\frac{\sqrt3}{2},
        \qquad
        x\equiv\pm\frac{\pi}{6}\pmod{\pi}.
\]
All candidates with \(q=1\) give value \(2\), candidates with \(q=2\) are in the long range and have value \(>2\), and candidates with \(q=4\) have value \(>2\) by Lemma~\ref{lem:short}.

For \(n=6\), the only possible candidates with value below \(3\)
come from \((q,r)=(5,1)\). For these candidates,
put \(C_5=\sum_{j=1}^4\cos(j\pi/10)\).
Lemma~\ref{lem:permutation} gives the exact identity
\[
        S_6\!\left(\frac{p\pi}{10}\right)
        =C_5+\left|\sin\frac{p\pi}{10}\right|
        \ge C_5+\sin\frac\pi{10}=A(5,1).
\]
Equality holds at \(p=1\), so \(S_6(\pi/10)=A(5,1)\) and
\[
        M_6=A(5,1)
        =
        \cos\frac{\pi}{10}
        +\cos\frac{\pi}{5}
        +\cos\frac{3\pi}{10}
        +2\cos\frac{2\pi}{5},
\]
that is,
\[
        M_6=
        \frac{-1+3\sqrt5+2\sqrt{5+2\sqrt5}}4.
\]
Since
\[
        \left|\sin\frac{p\pi}{10}\right|=\sin\frac\pi{10}
        \quad\Longleftrightarrow\quad
        p\equiv\pm1\pmod{10},
\]
the minimum is attained exactly at
\[
        x\equiv\pm\frac{\pi}{10}\pmod{\pi}.
\]
\end{proof}

\begin{remark}
For \(n\notin\{2,4,6\}\), the strict estimates exclude every rational
break point with least denominator \(q\ge2\). The equality
classification therefore follows from these estimates, without a
separate comparison of the numerators \(p\).
\end{remark}

\begingroup
\small
\raggedright

\endgroup

\end{document}